\newtheorem{theorem}{Theorem}[section]
\newtheorem{lemma}[theorem]{Lemma}
\numberwithin{equation}{subsection}
\newtheorem{definition}[theorem]{Definition}
\title{The behaviour of a certain additive function in large intervals between consecutive primes}
\author{Michael Th. Rassias}
\date{\today}
\address{Department of Mathematics and Engineering Sciences,
Hellenic Military Academy,
16673 Vari Attikis, Greece }\thanks{}
\begin{document}

 \maketitle
 
\begin{abstract} 
We investigate the behaviour of a certain additive function depending on prime divisors of specific integers lying in large gaps between consecutive primes. The result is obtained by a combination of results and ideas related to large gaps between primes and the Erd\"os-Kac theorem, especially the Kubilius model from Probabilistic Number Theory.\\
  
%\noindent\textbf{Key words.} Piatetski-Shapiro primes; $k$-th powers.\\ 
\noindent\textbf{2010 Mathematics Subject Classification:} 11P32, 11N05, 11A63.%
\newline
\end{abstract}

\section{Introduction}

This paper merges two themes:\\
(i) The occurence of large gaps between consecutive primes\\
and\\
(ii) the behaviour of additive arithmetic functions on special sets of integers.\\

Let $p_1=2<p_2<\cdots < p_n$ be the sequence of prime numbers, $d_n=p_{n+1}-p_n$ the $n$-th gap between consecutive primes.\\
The gap $d_n$ is large, when it is large in comparison to the average defined by
$$A(x)=x^{-1} \sum_{n\leq x} \frac{d_n}{\log p_n}.$$
It is a simple consequence of the Prime Number Theorem that
$$\lim_{x\rightarrow \infty} A(x)=1$$
and thus
\[
\limsup_{x\rightarrow \infty} \frac{d_n}{\log p_n}\geq 1.    \tag{1.1}
\]
By introducing the function 
$$G(x)=\max_{n\leq x}\frac{d_n}{\log p_n}$$
(1.1) can also be stated in the form
\[
G(x)\geq 1. \tag{1.2}
\]
We present below a short history of large gap results:\\
In 1931, Westzynthius \cite{Westzynthius}, improving on prior results of B\"acklund \cite{backlund} and Brauer-Zeitz \cite{brauer} proved that 
\[
G(x)\gg \frac{\log x \log_3 x}{\log_4 x}. \tag{1.3}
\]
(Here and in the sequel we define $\log_k x$ by $\log_1 x=\log x$ and $\log_k x = \log(\log_{k-1} x)$, $(k>1$)).\\
Erd\H{o}s \cite{erdos} sharpened (1.3) to 
\[
G(x)\gg \frac{\log x \log_2 x}{(\log_3 x)^2} \tag{1.4}
\]
and in 1938 Rankin \cite{rankin} made a subsequent improvement, namely
\[ 
G(x)\gg \frac{\log x\log_2 x \log_4 x}{(\log_3 x)^2}. \tag{1.5}
\]
His method differs only slightly from that of Erd\H{o}s. Starting  with the paper \cite{erdos} of Erd\H{o}s, all the results on large gaps between primes are baseed on modifications of the Erd\H{o}s-Rankin method.\\
We shortly describe its main features: Let $x>1$. All steps are considered for $x\rightarrow \infty$. Let
\[
P(x)=\prod_{p<x} p, \ y>x.  \tag{1.6}
\]
By the Prime Number Theorem we have 
$$P(x)=e^{x(1+o(1))}\:.$$
A system of congruence classes:
\begin{align*}
\{v\::\: v&\equiv h_{p_1} \bmod p_1\}\\
&\ \vdots \tag{1.7} \\
\{v\::\: v&\equiv h_{p_l} \bmod p_l\}
\end{align*}
is constructed, such that the congruence classes 
$$h_{p_l}\bmod p_l$$ 
cover the interval $(0, y]$.\\
Another system of congruences, closely linked to (1.7) is of crucial importance. The system
\begin{align*}
m&\equiv -h_{p_1} \bmod p_1\\ 
&\ \vdots \tag{1.8}\\  
m&\equiv -h_{p_l} \bmod p_l.
\end{align*}
By the Chinese Remainder Theorem, the system (1.8) has a unique solution $m_0$, with
$$1\leq m_0< P(x).$$
Let $v\in \mathbb{N}$, $1\leq v<y$. Then there is $j$, $1\leq j\leq l$, such that 
$$v\equiv h_{p_j} \bmod p_j.$$
From (1.6), (1.7) and (1.8)
$$m_0+v\equiv 0 \bmod p_j.$$
If $m_0$ is sufficiently large, then all integers $w\in (m_0, m_0+y)$ are composite. If
\[
p_n=\max\{p\ \text{prime}\:,\ p\leq m_0\}\tag{1.9}
\]
then it follows that 
$$p_{n+1}-p_n\geq y,$$
a large gap result.\\
After a few sieving steps, one obtains a residual set $\mathcal{R}_j$ consisting of the union of a set of prime numbers
\[
Q=\{ q\ \text{prime}\::\: x< q\leq y\} \tag{1.10}
\]
and a set of $z$-smooth integers, i.e. integers whose largest prime factor is $\leq z$, where $z$ is chosen appropriately. The fact that the number of smooth integers is very small is crucial. Thus the construction of the last sieving steps is basically reduced to the problem of the choice of residue classes \mbox{$h_p \bmod p$ } that cover the set $Q$ of primes in $(1, y]$.\\
An important quantity is the hitting number of the sieving step covering the set $Q$ of primes in $(1, y]$. For each prime $p$ used  in the sieving step it is defined as the number of elements belonging to the congruence class $h_p \bmod p$. In all papers prior to \cite{maierpom} this hitting number was bounded below by 1. Thus for each element $u$ of the residual set $\mathcal{R}_j$ a prime $p(u)$ could be found and the removal of at least one element from the residue class $h(p(u))\bmod p(u)$ was guaranteed.\\
In the paper \cite{maierpom} by Maier-Pomerance for a positive proportion of primes $p\in \mathcal{S}_3$ a hitting number of at least 2 could be achieved. The pairs of primes in the congruence classes $h_p \bmod p$ can be seen as generalized twin primes and the result needed could be obtained by application of the circle method. Problems on intersections of different arithmetic progressions need to be resolved. This was done by the use of a graph with the primes as vertices and the arithmetic progressions as edges. These combinatorial arguments were further improved by Pintz \cite{pintz} who obtained a hitting number of 2 for all primes $p\in \mathcal{S}_3$.\\
It was a famous prize problem of Erd\H{o}s to improve on the order of magnitude of the lower bound for $G(x)$ in (1.4). This could be achieved after more than 70 years in the papers \cite{ford} and \cite{ford2}, where the hitting number was tending to infinity together with $x$. A crucial ingredient was a result of Maynard \cite{maynard}. This result in turn is related to the great breakthrough results on small gaps between consecutive primes based on the GPY sieve - named after Goldston, Pintz and Yildirim - (see \cite{GPYI, GPYII}).\\
In the paper \cite{ford2}, Ford, Green, Konyagin, Maynard and Tao prove\\

\noindent\textit{Theorem 1 of \cite{ford2}:\\
For any sufficiently large $X$, one has }
\[
G(X)\gg \frac{\log X \log_2 X \log_4 X}{\log_3 X}. \tag{1.11}
\]
The implied constant is effective.\\
The proof combines ideas from the paper \cite{maynard} and the generalization of a hypergraph covering theorem of Pippenger and Spencer \cite{pip}.\\
In a sequence of several sieving steps, the problem is reduced to a problem of sieving a set $Q$ of primes in $[y]\setminus [x]$, which we now describe:\\
For $x$ sufficiently large, define
\[
y=C_0\:x\: \frac{\log x\log_3 x}{\log_2 x}. \tag{1.12}
\]
(Here and in the sequel the $C_i$'s denote fixed positive constants).\\
Let $z=x^{\log_3 x / (4\log_2 x)}$ and introduce three disjoint sets of primes
$$S=\{ s\ \text{prime}\::\: \log^{20} x< s\leq z\}\:,$$
\[
P=\{ p\ \text{prime}\::\: x/2< p\leq x\}\:,  \tag{1.13}
\]
\[
Q=\{ q\ \text{prime}\::\: x< q\leq y\}\:.
\]
For residue classes 
$$\vec{a}=\{ m\in\mathbb{Z}\::\: m\not\equiv a_s(\bmod s)\ \text{for all}\ s\in S\}$$
and likewise
$$\vec{b}=(b_p\bmod p)_{p\in P},$$
define the sifted sets
\[
S(\vec{a})=\{ m\in\mathbb{Z}\::\: m\not\equiv a_s(\bmod s)\ \text{for all}\ s\in S\}  \tag{1.14}
\]
and likewise
\[
S(\vec{b})=\{ m\in\mathbb{Z}\::\: m\not\equiv b_p(\bmod p)\ \text{for all}\ p\in P\}.  \tag{1.15}
\]
The crucial sieving result is\\

\noindent\textit{Theorem 2 of \cite{ford2}:\\
Let $x$ be sufficiently large and suppose that $y$ obeys (1.12). Then there are vectors
$$\vec{a}=(a_s\bmod s)_{s\in S}\ \ \text{and}\ \ \vec{b}=(b_p \bmod p)_{p\in P}\:, $$
such that }
\[
\# (Q\cap S(\vec{a})\cap S(\vec{b})) \ll \frac{x}{\log x}. \tag{1.16}
\]
In \cite{mr2} the author in joint work with H. Maier treated the problem of \textit{Large gaps between consecutive prime numbers containing perfect $k$-th powers of prime numbers}. He obtains the following result\\

\noindent\textit{Theorem 1.1 of \cite{mr2}:\\
There is a constant $C_1>0$ and infinitely many $n$, such that 
$$p_{n+1}-p_n\geq C_1\: \frac{\log p_n \log_2 p_n \log_4 p_n}{\log_3 p_n}$$
and the interval $(p_n, p_{n+1}]$ contains the $k$-th power of a prime.
}\\

\noindent The proof consists of a combination of the method of \cite{ford2}, the matrix method of \mbox{H. Maier} and the method of the paper \cite{konyagin} of Ford, Heath- Brown and Konyagin. The sieving steps (1.14) and (1.15) are modified by restricting the residue classes used for sieving.\\
One defines
\begin{align*}\tag{1.17}
\mathcal{A}=&\{ \vec{a}= (a_s \bmod s)_{s\in S}\::\: \exists\: c_s\ \text{such that}\\  
&\ \ a_s\equiv 1-(c_s+1)^k(\bmod s),\ c_s\not\equiv -1 (\bmod s)\},  
\end{align*}
\begin{align*}\tag{1.18}
\mathcal{B}=&\{ \vec{b}= (b_p \bmod p)_{p\in P}\::\: \exists\: d_p\ \text{such that}\\  
&\ \ b_p\equiv 1-(d_p+1)^k(\bmod p),\ d_p\not\equiv -1 (\bmod p)\}.  
\end{align*}

For $\vec{a}=(a_s \bmod s)_{s\in S}$ and $\vec{b}=(b_p \bmod p)_{p\in P}$ one defines the sifted sets
\[
S(\vec{a})=\{m\in\mathbb{Z}\::\: m\not\equiv a_s(\bmod s)\ \text{for all}\ s\in S\} \tag{1.19}
\]
and 
\[
S(\vec{b})=\{m\in\mathbb{Z}\::\: m\not\equiv b_p(\bmod p)\ \text{for all}\ p\in P\}. \tag{1.20}
\]
One obtains the modification of Theorem 2 (sieving primes) of \cite{ford2}:\\

\noindent\textit{Theorem (3.1) of \cite{mr2}:\\
Let $x$ be sufficiently large and suppose that $y$ obeys (1.12). Then there are vectors $\vec{a}\in\mathcal{A}$ and $\vec{b}\in\mathcal{B}$, such that
\[
\#\{Q\cap S(\vec{a})\cap S(\vec{b}) \}\ll\frac{x}{\log x}\:.
\]
}
The base row of the matrix $\mathcal{M}$ is then defined by 
$$\mathcal{R}=\{(m_0+1)^k+v-1\},\ \text{where}\ 1\leq v\leq y$$
and $m_0$ is defined by $1\leq m_0< P(C_2x)$ and by the congruences
\begin{align*}
&m_0\equiv c_s (\bmod\: s)\\
&m_0\equiv d_p (\bmod\: p) \tag{1.21}\\
&m_0\equiv 0 (\bmod\: q),\ q\in(1, x], q\not\in S\cup P\\
&m_0\equiv e_v (\bmod\: p_v), (e_v, p_v)\ \text{given by}\ v\equiv 1-(e_v+1)^k(\bmod p_v),
\end{align*}
where $e_v\not\equiv -1 (\bmod\: p_v)$, with the possible exceptions of $v$ from an exceptional set $V$ with 
$$\# V\ll x^{1/2+2\epsilon},$$
$$m_0\equiv g_p(\bmod \: p)$$
for all other primes $p\leq C_2x$, $g_p$ arbitrary, but $g_p\not\equiv -1 \bmod\: p$.\\
The matrix $\mathcal{M}$ consists  of the translates of the base row
$$\mathcal{M}= (a_{r, v})_{\substack{1\leq r\leq P(x)^{D-1}\\ 1\leq v\leq y}}$$
with 
$$a_{r,v}=(m_0+1+rP(x))^k+v-1.$$
It can be shown that $a_{r,v}$, $2\leq v\leq y$ is composite unless $v\in V$.\\
The desired $k$-th powers of primes are found in the first column
$$\{a_{r,1}=(m_0+1+rP(x))^k,\ 1\leq r\leq P(x)^{D-1}\}.$$
For the matrix method to work one also needs that $P(x)$ is a good modulus. The definition of this concept will be given in Section 2.\\
This construction now can also be carried out in the case $k=1$ for which it assumes a much simpler form.\\
By change of notation we also may assume that $C_2=1$. In this paper we need the following fact, which we state as:
\begin{lemma}\label{lem11}
There is $m_0$ with $(m_0, P(x))=1$, $1\leq m_0<P(x)$, such that $m_0+v$ is composite for $1\leq v\leq y$.
\end{lemma}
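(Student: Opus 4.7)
My plan is to specialise the matrix-method construction of \cite{mr2} to the case $k=1$, where the $k$-th power conditions in (1.17)--(1.21) become vacuous. Concretely, I assemble the desired $m_0$ via the Chinese Remainder Theorem using the sieve vectors already delivered by Theorem 2 of \cite{ford2}, and verify afterwards that the residue of $m_0$ modulo each prime $q<x$ is nonzero, so that $(m_0,P(x))=1$.

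First, invoke Theorem 2 of \cite{ford2} to produce vectors $\vec a=(a_s\bmod s)_{s\in S}$ and $\vec b=(b_p\bmod p)_{p\in P}$ satisfying (1.16). By inspecting the probabilistic construction underlying \cite{ford2}, one may arrange $a_s\not\equiv 0\pmod s$ and $b_p\not\equiv 0\pmod p$. The preliminary sieve steps alluded to in the excerpt reduce $[1,y]$ to $Q\cup\{z\text{-smooth integers}\}$; combining this with the standard bound $\Psi(y,z)=o(x/\log x)$ for the parameter choices (1.12) and $z=x^{\log_3 x/(4\log_2 x)}$, the uncovered set $U\subseteq[1,y]$ of $v$ that survive all the sieves (preliminary together with those by $\vec a$ and $\vec b$) satisfies $|U|\ll x/\log x$.

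I then define $m_0$ through the CRT system: (i) $m_0\equiv -a_s\pmod s$ for every $s\in S$ and $m_0\equiv -b_p\pmod p$ for every $p\in P$; (ii) for each $u\in U$, pick a distinct prime $r_u\in(z,x/2]$ with $r_u\nmid u$ and set $m_0\equiv -u\pmod{r_u}$, which is possible because $|U|\ll x/\log x$ is dominated by the $(1+o(1))x/(2\log x)$ primes in $(z,x/2]$ and the coprimality constraint eliminates at most one prime per $u$; (iii) for every remaining prime $q<x$, set $m_0\equiv 1\pmod q$. Taking the unique representative $m_0\in[1,P(x))$, the coprimality $(m_0,P(x))=1$ follows because every congruence in (i)--(iii) imposes a nonzero residue, and for any $1\le v\le y$ at least one of some $s\in S$, some $p\in P$, or some $r_u$ divides $m_0+v$, producing a proper prime divisor so that $m_0+v$ is composite.

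The main obstacle lies in the first step, in rigorously justifying $|U|\ll x/\log x$; this requires combining (1.16) with the analysis of the preliminary sieve steps of \cite{ford2} and the smooth-number estimate, and I would invoke it as a black box. The only genuinely new subtlety compared with \cite{ford2} is the coprimality condition $(m_0,P(x))=1$: for general $k$ it forces the admissibility restrictions (1.17)--(1.18) of \cite{mr2}, but for $k=1$ it collapses to the trivial nonzero-residue choice used above, so no further work is required.
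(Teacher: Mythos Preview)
Your approach is exactly the paper's: the paper offers no proof of this lemma beyond the sentence preceding it, namely that the construction (1.17)--(1.21) from \cite{mr2} specialises to $k=1$ and, after a change of notation (absorbing the shift $m_0\mapsto m_0+1$ and setting $C_2=1$), yields the stated $m_0$. Your CRT recipe (i)--(iii) has one bookkeeping slip worth fixing: the primes in $(z,x/2]$ are already consumed by the preliminary sieve of \cite{ford2} (there with $h_p=0$), so they are not available a second time in your step (ii); the clean resolution is precisely the shift hidden in the paper's ``change of notation'' --- run (1.21) verbatim with $k=1$ (so $m_0\equiv 0$ for those primes) and then replace $m_0$ by $m_0+1$, which simultaneously delivers the coprimality $(m_0+1,P(x))=1$ from the conditions $c_s,d_p,g_p\not\equiv -1$ and shifts the compositeness range by one.
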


We now come to the second theme: The behaviour of additive arithmetic functions on special sets of integers.\\
An early result was that of Erd\H{o}s and Kac \cite{erdos_kac} who obtained the following:\\
Let $f$ be strongly additive, satisfy $|f(p)|\leq 1$, and let
\[
A(x)=\sum_{p\leq x} \frac{f(p)}{p}\:, \tag{1.22}
\]
\[
B(x)=\sum_{p\leq x}\left( \frac{f(p)^2}{p}\right)^{1/2}\longrightarrow \infty\ \ (x\rightarrow \infty)\:. \tag{1.23}
\]
Then 
\[
x^{-1}\#\left\{ m\::\: \frac{f(m)-A(x)}{B(x)}\leq z\right\}\longrightarrow \frac{1}{\sqrt{2\pi}}\int_{-\infty}^z e^{-w^2/2} dw\ \ (x\rightarrow \infty).  \tag{1.24}
\]
Erd\H{o}s and Kac made essential use of the concept of independent random variables, the Central Limit Theorem and Brun's sieve method.\\
An important example is the number $\omega(m)$ of distinct prime factors for the integer $m$. Here one obtains from (1.20):
\[
x^{-1}\#\left\{ m\::\: \frac{\omega(m)-\log\log m}{\sqrt{\log\log m}}\leq z\right\}\longrightarrow \frac{1}{\sqrt{2\pi}}\int_{-\infty}^z e^{-w^2/2} dw\ \ (x\rightarrow \infty). \tag{1.25}
\]
Later Kubilius \cite{kubilius} gave a different proof of these results, introducing the idea of the Kubilius model, which will also play a role in this paper.\\
For the description of the basic ideas we first sketch a proof of (1.25) by the use of a simple Kubilius model. Later on in Section 2, we use a modified Kubilius model.\\
We cite the model of Kubilius from Elliot \cite{elliott}, p. 119. Let $r$ and $x$ be real numbers with $2\leq r\leq x$. Let 
\[
D=\prod_{\substack{p<x\\ p\ \text{prime}}} p\:, \tag{1.26}
\]
\[
S=\{m\in\mathbb{N}\::\: 1\leq m\leq x\}\:.\tag{1.27}
\]
For each prime $p\mid P$, let
\[
E(p)\ \text{be the set of $m\leq x$ with $p\mid m$}. \tag{1.28}
\]
Let 
$$\overline{E}(p)=S-E(p).$$
For each $k$ which divides $D$ we define the set
\[
E_k=\bigcap_{p\mid k} E(p)\bigcap_{p\mid \frac{D}{k}}\overline{E}(p). \tag{1.29}
\]
We now define two probability measures $\nu$ and $\mu$ on $\mathcal{B}$, and thus obtain the probability spaces
\[
(\mathcal{B}, \nu)\ \ \text{and}\ \ (\mathcal{B}, \mu). \tag{1.30}
\]
The measure $\nu$ is the simple frequency measure. If
$$A=\bigcup_{j=1}^J E_{k_j}\:,$$
then
\[
\nu A=\sum_{j=1}^J [x]^{-1} |E_{k_j}|.  \tag{1.31}
\]
The normal distribution in the result (1.25) suggests to introduce the random variables $X_p$ with 
\begin{equation*}
  X_p(m)=\begin{cases}
    1, & \text{if $p\mid m$},\\
    0, & \text{if $p\nmid m$}
  \end{cases}
\end{equation*}
and the application of the Central Limit Theorem, the independence of the random variables $X_p$, is not fulfilled.\\
With the second probability measure $\mu$ independence of the random variables is given.\\
One defines
\[
\mu E_k= \frac{1}{k} \prod_{p\mid \frac{D}{k}}\left(1-\frac{1}{p}\right). \tag{1.32}
\]
The proof of (1.25) is then completed by a comparison of the measures $\mu$ and $\nu$ and the application of the Central Limit Theorem for the measure $\mu$. The essential estimate for the comparison of $\mu$ and $\nu$ is the estimate
\[
|E_k|=\{1+O(L)\}\frac{x}{k} \prod_{p\mid \frac{D}{k}}\left(1-\frac{1}{p}\right) \tag{1.33}
\]
which is valid, whenever $k$ does not exceed $x^{1/2}$, with 
$$L=\exp\left(-\frac{1}{8}\:\frac{\log x}{\log r}\:\log\left(\frac{\log x}{\log r}\right)\right)+x^{-1/10}.$$
The reader interested in more details might see Elliott \cite{elliott}, pp. 119-123.\\
We conclude the introduction by a statement of our Theorem. First we give the following:
\begin{definition}\label{defn12}
For $m\in\mathbb{N}$, $m\geq 20$ let
\[
\omega^*(m)=\#\left\{p\mid m,\ p\ \text{prime},\ p\geq \frac{\log m}{(\log\log m)^2}\right\}. \tag{1.34}
\]
\end{definition}
\begin{theorem}\label{thm13}
There is a constant $C_3>0$, such that the following holds:\\
Let $\epsilon>0$, $\alpha\in\mathbb{R}$, $u\in\mathbb{N}$. Then there are infinitely many $n$, such that 
$$p_{n+1}-p_n\geq C_3\:\frac{\log p_n \log_2 p_n\log_4 p_n}{\log_3 p_n}$$
and
\[
\frac{\omega^*(p_n+u)-\log\log p_n}{\sqrt{\log\log p_n}}\in (\alpha-\epsilon,\ \alpha+\epsilon). \tag{1.35}
\]
\end{theorem}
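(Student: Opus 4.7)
The plan is to combine the Maier matrix construction underlying Lemma~\ref{lem11} with a Kubilius-model Erd\H{o}s--Kac theorem for shifted primes, adapted to the matrix setting in the spirit of Elliott \cite{elliott}, pp.~119--123.

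First I would set up the matrix. Apply Lemma~\ref{lem11} with $x$ large and $y$ as in (1.12) to obtain $m_0$ with $(m_0,P(x))=1$, $1\le m_0<P(x)$, and $m_0+v$ composite for $1\le v\le y$. Fix a constant $D>2$ and, following \cite{mr2}, set $m_r:=m_0+rP(x)$ for $r\in[1,R]$, $R:=P(x)^{D-1}$. Since $m_r+v\equiv m_0+v\pmod p$ for every prime $p\le x$, each $m_r+v$ inherits a prime factor $\le x$ from $m_0+v$ and is composite for $1\le v\le y$. The largest prime $p_{n(r)}\le m_r$ thus satisfies
$$p_{n(r)+1}-p_{n(r)}\;\ge\;y\;\gg\;\frac{\log m_r\,\log_2 m_r\,\log_4 m_r}{\log_3 m_r},$$
which supplies the gap lower bound for a suitable $C_3$. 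Now I restrict attention to those $x$ for which $P(x)$ is a \emph{good} modulus (in the sense to be made precise in Section~2 and in \cite{mr2}); as in Maier's original matrix method such $x$ form a set of positive lower density. For such $x$ the good-modulus hypothesis yields an asymptotic for primes in the progression $m_0\pmod{P(x)}$ of the strength
$$\#\{\,r\in[1,R]:m_r\ \text{prime}\,\}\;\gg\;\frac{R\,\log x}{x}\;\longrightarrow\;\infty,$$
and for every such $r$ one has $p_{n(r)}=m_r$. It remains to show that a positive proportion of these $r$ also satisfy $(\omega^*(m_r+u)-\log\log m_r)/\sqrt{\log\log m_r}\in(\alpha-\epsilon,\alpha+\epsilon)$.

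For that step I would set up a modified Kubilius model on the sample space $\Omega=\{\,m_r+u:r\in[1,R],\,m_r\ \text{prime}\,\}$ equipped with its frequency measure $\nu$, together with the product measure $\mu$ in which, for each prime $q$ with $q\nmid u\,P(x)$, the event $\{q\mid m_r+u\}$ has probability $1/(q-1)$ independently in $q$ (and probability $0$ for $q\mid u\,P(x)$). The comparison $\nu=(1+o(1))\mu$ on joint events $\{q_1\cdots q_s\mid m_r+u\}$ with $q_1\cdots q_s\le R^{1/2}$ and each $q_i$ in the range $[\log m_r/(\log\log m_r)^2,\ z^*]$, $z^*:=\exp((\log m_r)^{1/3})$, reduces to a uniform primes-in-progressions estimate to modulus $P(x)q_1\cdots q_s$. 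The Central Limit Theorem under $\mu$ then yields a Gaussian limit for the truncated count $\omega^*_{\le z^*}(m_r+u)$. Prime factors of $m_r+u$ exceeding $z^*$ contribute $O(\log m_r/\log z^*)=o(\sqrt{\log\log m_r})$ on average over $r$ by Brun--Titchmarsh, and the threshold $\log m_r/(\log\log m_r)^2$ in the definition of $\omega^*$ excises precisely the small primes that lie outside the range of the model. Selecting $r$ in the intersection of the two favourable sets and letting $x\to\infty$ along good moduli produces infinitely many $n$ satisfying both (1.35) and the gap lower bound.

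The main obstacle is the Kubilius comparison $\nu\sim\mu$, which requires a joint level-of-distribution input: equidistribution of the sequence $\{m_r\}$ in the arithmetic progression $m_0\pmod{P(x)}$ \emph{conditioned on} $m_r$ being prime and on $m_r+u$ being divisible by auxiliary primes $q_1,\dots,q_s$ whose product is up to $R^{1/2}$, uniformly in the $q_i$. Because $P(x)$ has size $e^{x(1+o(1))}$, a direct appeal to Siegel--Walfisz or to Bombieri--Vinogradov is insufficient in this range; it is precisely the good-modulus hypothesis from Maier's matrix method that supplies the required equidistribution and that must be meshed carefully with the comparison procedure described in \cite{elliott}.
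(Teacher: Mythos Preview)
Your architecture---Maier's matrix over a good modulus $P(x)$, followed by a Kubilius model on the shifted primes $m_r+u$ and a comparison of the frequency measure with a product measure---is exactly the route the paper takes. The defect is quantitative and lies in two interlocking parameter choices.

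First, your tail estimate is false as stated. With $z^*=\exp\bigl((\log m_r)^{1/3}\bigr)$ the trivial bound on the number of prime divisors of $m_r+u$ exceeding $z^*$ is $\log m_r/\log z^*=(\log m_r)^{2/3}$, and this is not $o\bigl(\sqrt{\log\log m_r}\bigr)$; it is larger by many orders of magnitude. (Averaging via Brun--Titchmarsh gives $\sum_{p>z^*}1/(p-1)\approx\tfrac{2}{3}\log\log m_r$, which is no better.) To force a tail of size $o\bigl(\sqrt{\log\log m_r}\bigr)$ one needs $\log z^*\gg(\log m_r)/\sqrt{\log\log m_r}$, i.e.\ the truncation must sit only slightly below $m_r$ on a logarithmic scale. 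The Kubilius comparison must then absorb auxiliary moduli of that size, which is out of reach with $R=P(x)^{D-1}$ for any fixed $D$.

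The paper repairs both issues simultaneously by taking the matrix exponent $G_0=(\log x)^2$ rather than a constant. Then $m(r)\asymp P(x)^{G_0}$, so $\log\log m(r)\asymp\log x$ while $\log m(r)/(\log\log m(r))^2\asymp x$: the lower cutoff in the definition of $\omega^*$ now coincides with $x$, and only primes in $(x,H]$ enter the model. Choosing $H=P(x)^{\sigma_1}$ with $\sigma_1=(\log x)^2/(\log\log x)^{1/3}$ makes the trivial tail bound $G_0/\sigma_1=(\log\log x)^{1/3}=o(\sqrt{\log x})$, and there is still room (via $\xi^2=P(x)^{\sigma_2}$, $\sigma_2=(\log x)^2/(\log\log x)^{1/6}$) for the sieve of Lemma~\ref{lem26} and the Bombieri--Vinogradov-type input of Lemma~\ref{lem25} to operate below $P(x)^{G_0}$. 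With your fixed $D$ the cutoff in $\omega^*$ lands at $\asymp x/(\log x)^2$, so primes $p\in[x/(\log x)^2,x)$ dividing $m_0+u$---a deterministic contribution you have not bounded---also enter $\omega^*(m_r+u)$ and cannot be handled by the model.
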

\section{Proof of Theorem \ref{thm13}}
We now modify the Kubilius model described in the introduction. First the set $S$ in $(1.27)$ has to be replaced.\\
Let $m_0$ be the number, whose existence has been proven in Lemma \ref{lem11}. Thus we have $m_0$ with $(m_0, P(x))=1$, $1\leq m_0<P(x)$, such that $m_0+v$ is composite for $1\leq v\leq y$.\\
We borrow from \cite{maierpom}, the following definition of a good modulus, which is crucial for the matrix method.
\begin{definition}\label{defn21}
Let us call an integer $q>1$ a good modulus, if $L(s, \chi)\neq 0$ for all characters $\chi\bmod q$ and all $s=\sigma+it$ with
$$\sigma>1-\frac{C_4}{\log(q|t|+1)}\:.$$
This definition depends on the size of $C_4>0$.
\end{definition}
We have
\begin{lemma}\label{lem22}
There is a constant $C_4>0$, such that, in terms of $C_4$, there exist arbitrarily large values of $x$, for which the modulus
$$P(x)=\prod_{p<x} p$$
is good.
\end{lemma}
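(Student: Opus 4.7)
The plan is to combine three classical inputs on Dirichlet $L$-functions: the Landau zero-free region, the Landau--Page repulsion principle for Siegel zeros, and Siegel's (ineffective) lower bound. The Landau zero-free region gives an absolute constant $c_1>0$ such that for every modulus $q$, $L(s,\chi)$ has no zeros in $\sigma>1-c_1/\log(q(|t|+2))$ except, possibly, for a single real (Siegel) zero $\beta$ arising from a real primitive character $\chi^*\bmod q^*$ with $q^*\mid q$. Choosing $C_4\le c_1$, the failure of $P(x)$ to be a good modulus in the sense of Definition~\ref{defn21} can only come from such a primitive real character whose conductor $q^*$ divides $P(x)$ and whose Siegel zero $\beta$ satisfies $1-\beta<C_4/\log P(x)$; I will call such a $q^*$ a \emph{witness} for $x$.

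The central step is to show that, if $C_4$ is small enough, witnesses are essentially unique across all bad $x$. Suppose $x_1<x_2$ are both bad with distinct witnesses $q_1^*\ne q_2^*$ and Siegel zeros $\beta_1,\beta_2$. Since $q_i^*\mid P(x_i)\le e^{x_i(1+o(1))}$, one has $\log(q_1^*q_2^*)\le 2x_2(1+o(1))$, and the Landau--Page repulsion theorem supplies an absolute constant $c_2>0$ with $\min(1-\beta_1,1-\beta_2)\ge c_2/\log(q_1^*q_2^*)\ge c_2/(2x_2(1+o(1)))$. On the other hand, the badness of $x_2$ forces $\min(1-\beta_1,1-\beta_2)\le 1-\beta_2<C_4/(x_2(1+o(1)))$. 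Selecting $C_4<c_2/3$ produces a contradiction for $x_2$ large, so all sufficiently large bad $x$ share a single common witness $q^*$.

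It then remains to invoke Siegel's theorem: for any $\epsilon>0$ there is an (ineffective) $c_\epsilon>0$ such that every Siegel zero $\beta^*$ of a primitive real character modulo $q^*$ satisfies $1-\beta^*\ge c_\epsilon(q^*)^{-\epsilon}$. Taking $\epsilon=1/2$, the common witness $q^*$ can be a witness for $x$ only when $c_{1/2}(q^*)^{-1/2}<C_4/\log P(x)$, i.e.\ only when $x\ll (q^*)^{1/2}$. Consequently, the set of bad $x$ is contained in a finite initial segment of $\mathbb{N}$, so $P(x)$ is good for all sufficiently large $x$. The main obstacle is simply the careful matching of the three absolute constants $c_1,c_2,c_\epsilon$ in the choice of $C_4$; the ineffectivity of Siegel's bound is precisely what forces the lemma to assert only \emph{arbitrarily large} rather than \emph{all sufficiently large} values of $x$.
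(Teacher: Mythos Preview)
Your argument is correct; the paper itself offers no proof but simply cites this as Lemma~1 of \cite{maier}, and your outline is exactly the classical argument underlying that result (Landau zero-free region to reduce to a possible Siegel zero, Landau--Page repulsion to force a single common exceptional conductor, then a lower bound on $1-\beta^*$ to bound the bad $x$).

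Two minor remarks. First, Siegel's theorem is stronger than you need: once the Landau--Page step pins down a single fixed witness $q^*$ with its fixed real zero $\beta^*<1$, the requirement $1-\beta^*<C_4/\log P(x)$ already fails for all large $x$ simply because $1-\beta^*$ is a fixed positive number (Dirichlet's $L(1,\chi^*)\neq 0$). Second, your closing sentence slightly misstates the situation: your argument in fact establishes the stronger conclusion that $P(x)$ is good for \emph{all} sufficiently large $x$; the ineffectivity (whether coming from Siegel or from the unknown location of the putative $\beta^*$) affects only the computability of the threshold, not the logical strength of the statement, so the lemma's phrasing ``arbitrarily large'' is merely a weaker consequence rather than something forced by ineffectivity.
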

\begin{proof}
This is Lemma 1 of \cite{maier}.
\end{proof}
In the sequel we assume that $P(x)$ is a good modulus. We now apply the idea sketched in the Introduction: We again choose a set $S^*$ - in analogy to (1.27) and two probability spaces with $\sigma$-algebras and probability measures - in analogy to (1.30).
\begin{definition}\label{defn23}
We set
\begin{align*}
G_0&=(\log x)^2 \tag{2.1}\\
H&= P(x)^{\sigma_1}\ \text{with}\ \sigma_1=\sigma_1(x)=\frac{(\log x)^2}{(\log\log x)^{1/3}}\tag{2.2}\\
\xi^2&=P(x)^{\sigma_2}\ \text{with}\ \sigma_2=\sigma_2(x)=\frac{(\log x)^2}{(\log\log x)^{1/6}}\tag{2.3}.
\end{align*}
\end{definition}
\begin{definition}\label{defn24}
For $r\in\mathbb{N}$, $P(x)^{G_0-1}< r\leq 2P(x)^{G_0-1}$ we write
$$m_1(r)=m_0+rP(x),\ m_2(r)=m_0+rP(x)+u.$$
Let
$$S^*=\{m_2(r)\::\: P(x)^{G_0-1}< r\leq 2P(x)^{G_0-1},\ m_1(r)\ \text{prime}\}.$$
Let
$$D^*=\prod_{p<H}p.$$
For each prime $p$ with $x<p\leq P(x)^{G_1}$ we introduce the set
$$E^*(p)=\{m\in S^*\::\: p\mid m\}.$$
Let 
$$\overline{E^*(p)}=S^*-E^*(p).$$
Corresponding to each integer $k$ which divides $D^*$ we define the set
$$E_k^*=\bigcap_{p\mid k} E^*(p)\bigcap_{p\mid \frac{D^*}{k}} \overline{E^*(p)}.$$
Let $\mathcal{B}$ be the least $\sigma$-algebra which contains the $E^*(p)$. We introduce the measures $\nu^*$ and $\mu^*$. The measure $\nu^*$ is the frequency measure. 
\end{definition}
If
$$A=\bigcup_{j=1}^k E_{k_j}^*\:,$$
then
\[
\nu^*A=\sum_{j=1}^h |S^*|^{-1} |E_{k_j}^*|. \tag{2.4}
\]
We obtain the probability space $(\mathcal{B}, \nu^*).$\\
The random variables $X_p^*$ defined by
\begin{equation*}
  X_p^*(m)=\begin{cases}
    1, & \text{if $p\mid m$},\tag{2.5}\\
    0, & \text{if $p\nmid m$}
  \end{cases}
\end{equation*}
are not independent.\\
The second measure $\mu^*$ is defined by
\[
\mu^*E_k^*=\frac{1}{k}\prod_{p\mid \frac{D^*}{k}} \left(1-\frac{1}{p-1}\right). \tag{2.6}
\]
In the probability space $(\mathcal{B}, \mu^*)$ the random variables $X_p^*$ are independent. We set
\[
\eta^*=\sum_{x<p\leq H} X_p^*. \tag{2.7}
\]
Application of the Central Limit Theorem on the probability space $(\mathcal{B}, \mu^*)$ and the random variable $\eta^*$ gives:
\begin{align*} \tag{2.8}
&\lim_{x\rightarrow \infty} \mu^*\left(\bigcup_{k\mid D^*} E_k^*\::\: \forall r\::\: m(r)\in E_k^*\ |\ \frac{\eta^*(m(r))-\log\log m(r)}{\sqrt{\log\log m(r)}}\in  (\alpha-\epsilon,\ \alpha+\epsilon)\right)\\
&=\frac{1}{\sqrt{2\pi}}\int_{\alpha-\epsilon}^{a+\epsilon} e^{-w^2/2} dw.
\end{align*}
Basic for the comparison of the measure $\mu^*$ with the frequency measure $\nu^*$ is the following.
\begin{lemma}\label{lem25}
Let 
$$\psi(x, q, a)=\sum_{\substack{n\leq x\\ n\equiv a \bmod q}}\Lambda(n).$$
Let $\delta_1, \delta_2, \delta_3$ be arbitrarily small positive constants. Let $R$ be a positive integer,
$$R<\exp\left(\frac{\log Y}{(\log\log Y)^{1+\delta_1}}\right)\:,$$
$Q\geq 1$ and $L=\log YQ$. Assume that $L(s, \chi)\neq 0$ for 
$$Re(s)>1-\frac{\delta_2}{\log(R(|t|+1))}$$
for all $\chi \bmod M$, with $M\leq R^{1+\delta_3}$. Then
\begin{align*}
&\sum_{\substack{q\leq Q\\ (q, R)=1}}\max_{X<Y}\max_{(a, qR)=1}\left|\psi(x, qR, a)-\frac{X}{\phi(qR)}  \right|\\
&\ll_B\frac{Y}{\phi(R)}(\log Y)^{-B}+Y^{1/2}\frac{R^2}{\phi(R)} QL^5\:,
\end{align*}
where $B>0$ is arbitrarily large.
\end{lemma}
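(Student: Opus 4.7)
The plan is to mimic the standard Bombieri--Vinogradov argument but with the modulus $qR$ factored as a product of two coprime parts: a fixed ``good'' part $R$ and a varying part $q$. By character orthogonality, for $(a,qR)=1$,
$$\psi(X,qR,a)-\frac{X}{\phi(qR)} = \frac{1}{\phi(qR)}\sum_{\substack{\chi\bmod qR\\ \chi\ne \chi_0}}\bar\chi(a)\,\psi(X,\chi) + O\!\left((\log qR)^2\right),$$
the error absorbing prime-power contributions. Taking maxima over $a$ and $X<Y$, the left-hand side of the lemma is controlled by
$$\sum_{\substack{q\le Q\\ (q,R)=1}}\frac{1}{\phi(qR)}\sum_{\substack{\chi\bmod qR\\\chi\ne\chi_0}}\max_{X<Y}|\psi(X,\chi)|.$$
Since $(q,R)=1$, every character $\chi\bmod qR$ factors uniquely as $\chi=\chi_q\chi_R$ with $\chi_q\bmod q$, $\chi_R\bmod R$. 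I would split the character sum into Part A, where $\chi_R=\chi_0^R$, and Part B, where $\chi_R$ is non-principal.

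Part A reduces, after sieving $(n,R)=1$ out of $\psi$, to the classical Bombieri--Vinogradov estimate over moduli $q\le Q$; the Siegel--Walfisz step uses only the trivial character $\chi_R$, and standard partial summation together with the $(\log YQ)^{-B}$ saving yield the first claimed term $\ll_B \frac{Y}{\phi(R)}(\log Y)^{-B}$. The smallness assumption $R<\exp(\log Y/(\log\log Y)^{1+\delta_1})$ is exactly what is needed so that the Siegel--Walfisz saving $(\log Y)^{-B}$ survives the division by $\phi(R)$ and the coprimality sieving.

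Part B is the substantive case and is where the zero-free region hypothesis enters. Each product $\chi_q\chi_R$ is induced by a primitive character of conductor $M\le qR$ that is divisible by $\operatorname{cond}(\chi_R)\le R\le R^{1+\delta_3}$, so the hypothesis grants a zero-free region for its $L$-function. For fixed non-principal $\chi_R$, apply Perron's formula to $\psi(X,\chi_q\chi_R)$, truncate at height $T\asymp Y^{1/2}$, and shift the contour to $\sigma=1-c/\log(R T)$ inside the assumed zero-free region; combining the resulting pointwise bound with Gallagher's hybrid large sieve over the $\chi_q$ of conductor $\le Q$ yields
$$\sum_{\substack{q\le Q\\(q,R)=1}}\sum_{\chi_q\bmod q}\max_{X<Y}|\psi(X,\chi_q\chi_R)|\ll Y^{1/2}(Q+R)R\,L^5.$$
Summing over the $\phi(R)$ non-principal $\chi_R$ and dividing by $\phi(qR)\asymp \phi(q)\phi(R)$ produces the advertised second term $Y^{1/2}(R^2/\phi(R))QL^5$.

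The main obstacle is the execution of Part B: one must convert the hypothesized narrow zero-free strip for $L(s,\chi)$ with $\chi$ of conductor $\le R^{1+\delta_3}$ into an averaged upper bound on $\psi(X,\chi_q\chi_R)$ that is uniform in $q$ and in the non-principal $\chi_R$. This is precisely the hybrid explicit-formula-plus-large-sieve step, and the exponents $R^2$ and $L^5$ that appear in the bound trace back to the Perron truncation error, the density of characters modulo $R^{1+\delta_3}$, and the standard four or five logarithmic factors inherited from Gallagher's hybrid large sieve; no qualitative improvement beyond what the assumed zero-free region forces should be needed.
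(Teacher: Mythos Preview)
The paper does not prove this lemma; it simply quotes it as Lemma~10 of \cite{maier2}. So there is no argument in the paper to compare your sketch against, and your outline---character orthogonality, split $\chi=\chi_q\chi_R$, treat the $\chi_R=\chi_0^R$ contribution by the classical Bombieri--Vinogradov machinery---is indeed the architecture of Maier's original proof.

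There is, however, a genuine misreading in your Part~B. You write that for non-principal $\chi_R$ the product $\chi_q\chi_R$ is induced by a primitive character of conductor divisible by $\operatorname{cond}(\chi_R)\le R$, ``so the hypothesis grants a zero-free region for its $L$-function.'' This is not what the hypothesis says: the assumed zero-free region is only for characters to moduli $M\le R^{1+\delta_3}$, whereas the conductor of $\chi_q\chi_R$ can be as large as $qR$, and nothing bounds $Q$ by $R^{\delta_3}$. You therefore cannot shift the contour for \emph{all} such characters into the assumed strip.

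The correct execution of Part~B splits once more according to the size of the conductor (equivalently, of $q$). For characters of small conductor---say $q\le R^{\delta_3}$, so that $qR\le R^{1+\delta_3}$---the hypothesis does apply, and the explicit formula with the assumed zero-free region gives an individual bound $\psi(X,\chi_q\chi_R)\ll X\exp(-c\log X/\log R)$, which, together with $R<\exp(\log Y/(\log\log Y)^{1+\delta_1})$, yields the saving of $(\log Y)^{-B}$. This range is the analogue of the Siegel--Walfisz input in the classical proof, and it is precisely here that the hypothesis on $L$-functions is used. For characters of larger conductor one abandons the zero-free region entirely and relies on the large sieve (Gallagher's hybrid form), which produces the second term $Y^{1/2}(R^2/\phi(R))QL^5$. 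Your final paragraph already gestures at the large sieve, but as written your argument applies the zero-free region where it is not available; separating the two ranges is the missing step.
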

\begin{proof}
This is Lemma 10 of \cite{maier2}.
\end{proof}
The determination of $\nu^*E_k^*$ can be formulated as a Sieve problem.\\
We recall some notations from Halberstam-Richert \cite{ref14} (with minor modifications).
\begin{align*}
&\mathcal{Q}=\text{a finite set of integer}\\
&\mathcal{P}=\text{a subset of the set of all primes}\\
&{X^*}=\text{a real number $>1$}\\
&{z}=\text{a real number $\geq 2$}.
\end{align*}
Then we define
$$S(\mathcal{Q}, \mathcal{P}, z)=\left|\{a\in \mathcal{Q}\::\: p\nmid a\ \text{for all}\ p\in \mathcal{P},\ p<z\}\right|.$$
Let $\zeta$ be a multiplicative function, defined for all squarefree positive integers $d$, such that $\zeta(p)=0$, if $p\not\in \mathcal{P}.$ We further define
$$\mathcal{Q}_d=\{a\in \mathcal{Q}\::\: a\equiv 0\bmod d\}\:,$$
$$R_d=|\mathcal{Q}_d|-\frac{\zeta(d)}{d}X$$
$$W(z)=\prod_{p<z}\left(1-\frac{\zeta(p)}{p}\right).$$
Then we have
\begin{lemma}\label{lem26}
Let $\zeta$ satisfy the conditions:
$$(\Omega_1)\::\: 0\leq \frac{\zeta(p)}{p}\leq 1-\frac{1}{A_1}$$
$$(\Omega_2)\::\: \sum_{z'\leq p<z}\frac{\zeta(p)\log p}{p}\leq \kappa\log\frac{z}{z'}+A_2\ \ \text{if}\ 2\leq z'<z,$$
where $\kappa>0$, $A_1\geq 1$, $A_2\geq 1$. Let
$$\xi\geq z, \ \tau=\frac{\log\xi}{\log z},\ \omega(d)=\sum_{p\mid d}1\:.$$
Then 
$$S(\mathcal{Q}, \mathcal{P}, z)=X^* W(z)+Error,$$
with 
\[
Error= O(X^* W(z)\exp(-\tau(\log \tau+1)))+\theta\sum_{\substack{d<\xi^2\\ \zeta(d)\neq 0}} 3^{\omega(d)}|R_d|,
\]
where the constant implied by $``O"$ depends on $\kappa, A_1, A_2$ and $|\theta|\leq 1$.
\end{lemma}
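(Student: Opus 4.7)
The plan is to recognize the statement as the fundamental lemma of the combinatorial sieve in Jurkat--Richert form, which appears as Theorem 2.5 of Halberstam--Richert \cite{ref14}. The hypotheses $(\Omega_1)$ and $(\Omega_2)$ are exactly their nondegeneracy and sifting-dimension-$\kappa$ conditions, and the conclusion matches theirs verbatim (up to the constants implicit in $O$ and $\theta$). The cleanest way to discharge the lemma is therefore to cite this reference directly, in the same way Lemma \ref{lem25} is handled.

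To indicate the shape of the underlying argument: one constructs Rosser--Iwaniec style upper and lower sieve weights $\lambda_d^{\pm}$ supported on squarefree divisors $d<\xi^2$ of $\Pi(z):=\prod_{p<z,\,p\in\mathcal{P}} p$, satisfying $|\lambda_d^{\pm}|\leq 1$ and
$$\sum_{d\mid n}\lambda_d^{-}\ \leq\ \mathbf{1}_{n=1}\ \leq\ \sum_{d\mid n}\lambda_d^{+}.$$
Summing over $a\in\mathcal{Q}$ and substituting $|\mathcal{Q}_d|=\zeta(d)X^{*}/d+R_d$ produces two-sided bounds of the form
$$X^{*}\!\!\sum_{d\mid \Pi(z)}\!\!\lambda_d^{\pm}\,\frac{\zeta(d)}{d}\ +\ O\!\Bigl(\sum_{d<\xi^2}3^{\omega(d)}|R_d|\Bigr),$$
so that the remainder $\theta\sum_{d<\xi^2}3^{\omega(d)}|R_d|$ in the statement comes out automatically (the factor $3^{\omega(d)}$ absorbing the combinatorial multiplicity that arises when the Brun-style truncation is unfolded).

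The analytic heart of the proof---and what I expect to be the main obstacle if one insists on reproving the lemma---is to establish
$$\sum_{d\mid\Pi(z)}\lambda_d^{\pm}\,\frac{\zeta(d)}{d}\ =\ W(z)\bigl(1+O(\exp(-\tau(\log\tau+1)))\bigr)$$
under the dimension hypothesis $(\Omega_2)$. This is done by iterating Buchstab's identity with truncation at level $\xi^{2}$: $(\Omega_2)$ (together with Mertens-type estimates for $\sum\zeta(p)/p$) controls each successive contribution, and a careful combinatorial/asymptotic analysis of the resulting alternating sum over descending prime chains $p_1>p_2>\cdots$ yields the characteristic $\exp(-\tau(\log\tau+1))$ saving. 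Combining this main-term asymptotic with the error estimate above produces exactly the form of the lemma, and the constant implicit in $O$ depends only on $\kappa$, $A_1$, $A_2$ as claimed.
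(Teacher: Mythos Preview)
Your approach is essentially identical to the paper's: the paper simply writes ``This is a special case of (\cite{ref14}, Theorem 7.1, p.~206)'' and moves on, so discharging the lemma by direct citation to Halberstam--Richert is exactly right. The only discrepancy is the pinpoint reference (you cite Theorem~2.5, the paper cites Theorem~7.1, p.~206); your additional sketch of the Rosser--Iwaniec weights and the Buchstab iteration is accurate but goes beyond what the paper supplies.
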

\begin{proof}
This is a special case of (\cite{ref14}, Theorem 7.1, p. 206).
\end{proof}
We now apply Lemma \ref{lem26} with
$$\mathcal{Q}=\mathcal{Q}^{(k)}=\{m\in S^*\::\: m\equiv 0\bmod k\}$$
$$X=\frac{|S^*|}{\phi(k)}, \zeta(p)=\frac{p}{p-1}\:,$$
$$\mathcal{P}=\{p\ \text{prime}\::\: x\leq p<H\},\ z=H$$
$$\xi^2\ \text{from (2.3)},\ \kappa=1$$
and obtain (with $\tau=\frac{\log\xi}{\log H}=(\log\log x)^{1/6})$
\begin{align*}
\nu E_k^*&=\frac{|S^*|}{\phi(k)}\:\prod_{p\mid D^*/k}\left(1-\frac{1}{p-1}\right)\\
&+(1+O(\exp(-\tau(\log\tau+1)))+\theta\sum_{d<\xi^2} 3^{\omega(d)}|R_d^{(k)}|.
\end{align*}
We have
\begin{align*}
\mathcal{Q}_d^{(k)}=&\{m_0+rP(x)+u\::\: m_0+rP(x)\ \text{prime},\ P(x)^{\sigma_0-1}<r\leq 2P(x)^{\sigma_0-1},\\
& m_0+rP(x)+u\equiv 0 \bmod k, m_0+rP(x)+u\equiv 0\bmod d\}\:.
\end{align*}
The two congruences
$$m_0+rP(x)+u\equiv 0 \bmod k$$
and
$$m_0+rP(x)+u\equiv 0 \bmod d$$
are equivalent to a single congruence
$$r\equiv c(k,d)\bmod kd.$$
We thus have (with the familiar notation):
$$\pi(w, q, d)=\#\{ p\leq w,\ p\ \text{prime}, p\equiv d \bmod q\}$$
\begin{align*}
|R_d^{(k)}|&=\left| \mathcal{Q}_d^{(k)}-\frac{|S^*|}{\phi(k)\phi(d)}\right|\\
&\leq\: \vline\bigg(\pi(2P(x)^{\sigma_0}, P(x)kd, c(k,d)-\pi(P(x)^{\sigma_0}, P(x)kd, c(k,d))\\
&\ \ - \left(\frac{li(2P(x)^{\sigma_0}}{\phi(P(x))\phi(kd)}-\frac{li(P(x)^{\sigma_0}}{\phi(P(x))\phi(kd)}\right)\vline
\end{align*}
The sum of errors
\[
\sum_{k\mid D^*}\sum_{d\leq P(x)^{\sigma_2}} R_d^{(k)} \tag{2.10}
\]
can thus be estimated by Lemma \ref{lem25}.\\
The estimate for the sum
$$\sum=\sum_{d<\xi^2} 3^{w(d)}|R_d^{(k)}|$$
may be estimated by
$$\sum\ll \left(\frac{3^{\omega(d)}}{d}|S^*|\right)^{1/2}\left(\sum_{k\mid D^*}\sum_{d\leq P(x)^{\sigma_2}}R_d^{(k)}\right)^{1/2}\:,$$
where we have used the Cauchy-Schwarz inequality and the trivial bound 
$$|R_d|\ll \frac{|S^*|}{d}\:.$$
Thus we obtain
\[
\left|\mu^*\bigcup_{k\mid D^*} E_k^* -\nu^*\bigcup_{k\mid D^*} E_k^*\right| \ll \left|\mu^*\bigcup_{k\mid D^*} E_k^*\right| (\log P(x))^{-B}\:,\tag{2.11}
\]
$B$ arbitrarily large.\\
From (2.8) we have:
\begin{align*}  \tag{2.12}
&\lim_{x\rightarrow\infty} \mu^*\bigcup_{k\mid D^*}\bigg( E_k^*\::\:\forall r\ \text{with}\ m(r)\in E_k^*\::\: \frac{\eta^*(m(r))-\log\log m(r)}{\sqrt{\log\log m(r)}}\:\epsilon(\alpha-\epsilon, \alpha+\epsilon)\\ 
&\ \ \ \ \ \ \ = \frac{1}{\sqrt{2\pi}}\int^{\alpha+\epsilon}_{\alpha-\epsilon} e^{-w^2/2} dw\bigg)\\
&\geq C(\alpha, \epsilon)\lim_{x\rightarrow \infty} \mu^*\left(\bigcup_{k\mid D^*} E_k^*\right)\ \ \text{with\ $C(\alpha, \epsilon)>0$}.
\end{align*}
From (2.11) and (2.12) it follows
\begin{align*}  \tag{2.13}
&\nu^*\bigg(\bigcup_{k\mid D^*}\bigg( E_k^*\::\:\forall r\ \text{with}\ m(r)\in E_k^*\::\: \frac{\eta^*(m(r))-\log\log m(r)}{\sqrt{\log\log m(r)}}\:\epsilon(\alpha-\epsilon+\delta, \alpha+\epsilon-\delta)\\ 
&\ \ \ \ \ \ \ = \frac{1}{\sqrt{2\pi}}\int^{\alpha+\epsilon-\delta}_{\alpha-\epsilon+\delta} e^{-w^2/2} dw\bigg)\\
&\geq C(\alpha,\delta, \epsilon)\mu^*\left(\bigcup_{k\mid D^*} E_k^*\right)\ \ \text{if $x$ is sufficiently large}.
\end{align*}
For the number 
$$\omega^*(m(r))-\eta^*(m(r))$$
of prime factors $p$ of $m(r)$ with 
$$P(x)^{\sigma_1}<p\leq 3P(x)^{\sigma_0}$$
we have
$$P(x)^{\sigma_1(\omega^*(m(r))-\eta^*(m(r)))}\leq 3 P(x)^{\sigma_0}$$
and thus
\[
\omega^*(m(r))-\eta^*(m(r))\ll (\log\log x)^{1/3}.  \tag{2.14}
\]
From (2.13) and (2.14) we obtain Theorem \ref{thm13}.\\

\noindent\textit{Open Problems.} The following questions are of interest:
\begin{enumerate}
\item Can one prove the statement of Theorem \ref{thm13} with the function $\omega$ instead of $\omega^*$, thus also taking into account the prime factors 
$$p<\frac{\log m}{(\log\log m)^2}\:?$$
\item Can one prove a statement involving all the large intervals $(p_n, p_{n+1})$ and not only those obtained by a special construction?
\end{enumerate}

\vspace{3mm}
\textbf{Acknowledgements.} The author wishes to express his gratitude to Professor Helmut Maier for his invaluable insight and the fruitful comments and discussions. He also wishes to thank the referee for his/her very valuable remarks, which helped improve the presentation of the paper.

\vspace{10mm}

\end{document}